\tikzset{emp/.style={double distance = 0.3ex}}
\tikzset{oriented/.style={->,shorten >= 1.5pt}}
\newcommand{\defi}[1]{\emph{#1}}
\DeclareMathOperator{\bigoh}{\mathrm{O}}
\newtheorem{theorem}{Theorem}
\newtheorem{problem}[theorem]{Problem}
\newtheorem{conjecture}[theorem]{Conjecture}
\newtheorem{lemma}[theorem]{Lemma}
\begin{document}

\begin{frontmatter}[classification=text]


\title{Separating the Edges of a Graph by a Linear Number of Paths\titlefootnote{
This research has been partially supported by Coordena\c cão de Aperfei\c coamento
de Pessoal de N\'\i vel Superior -- Brasil -- CAPES -- Finance Code 001.
M. Bonamy is supported by ANR project DISTANCIA {(\small Metric Graph Theory, ANR-17-CE40-0015)}.
F.~Botler is supported by CNPq {(\small 304315/2022-2)},
FAPERJ {(\small 211.305/2019 and 201.334/2022)}
and CAPES-PRINT {(\small 88887.695773/2022-00)}.
T.~Naia is supported by CNPq {(\small 201114/2014-3)} and FAPESP {(\small 2019/04375-5, 2019/13364-7, 2020/16570-4)}.
FAPERJ and FAPESP are, respectively, Research Foundations of
Rio de Janeiro and S\~ao Paulo.  CNPq is the National
Council for Scientific and Technological Development of Brazil.
}}

\author[bonamy]{Marthe Bonamy}
\author[botler]{F\'abio Botler}
\author[dross]{François Dross}
\author[naia]{T\'assio Naia}
\author[skokan]{Jozef Skokan}

\begin{abstract}
Recently, Letzter proved that any graph of order~\(n\)
contains a collection~$\mathcal{P}$ of \(\bigoh(n\log^\star n)\)
paths with the following property:
for all distinct edges $e$ and~$f$ there exists a path in $\mathcal{P}$
which contains \(e\) but not~\(f\).
We improve this upper bound to~\(19 n\), thus answering a question of G.O.H. Katona and
confirming a conjecture independently posed
by~Balogh, Csaba, Martin, and Pluh\'ar and
by~Falgas-Ravry, Kittipassorn, Kor\'andi, Letzter,  and Narayanan. Our proof is elementary and self-contained.
\end{abstract}
\end{frontmatter}

A path \defi{separates} an edge $e$ from an edge $f$ if it contains the former and not the latter. How many paths do we need to separate any edge from any other?

This question was first asked by G.O.H. Katona in 2013 (see~\cite{falgas2013separating}), in line with the general study of separating systems initiated by R\'enyi in 1961~\cite{renyi1961random}. Note that the roles of $e$ and~$f$ are not symmetric. There are in fact two variants of the problem, which we clarify now. A~collection $\mathcal{P}$ of paths in a graph $G$ is a \defi{strongly-separating path system} of $G$ if, for any two distinct edges $e$ and $f$ of $G$, there exist paths $P_e$ and~$P_f$ in~$\mathcal{P}$ such that $e \in P_e$, $e \not\in P_f$ and $f\in P_f$, $f \not\in P_e$.
The collection $\mathcal{P}$ is a \defi{weakly-separating path system} of $G$ if for any two distinct edges $e$ and $f$, there is a path $P \in \mathcal{P}$ that contains one and not the other.
The \defi{size} of a (weakly or strongly) separating path system is the number of paths in the collection.
We recommend the excellent introduction in~\cite{letzter2022separating} for a more complete description of the history of the problem.
We are interested in the following conjecture.

\begin{conjecture}[\cite{balogh2016path,falgas2013separating}]\label{conj:lin-sep}
Every graph on $n$ vertices admits a strongly-separating path system of size $\bigoh(n)$.
\end{conjecture}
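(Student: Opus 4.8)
The plan is to prove the conjecture in the sharp quantitative form announced in the abstract — that $19n$ paths always suffice — by combining a few cheap reductions, a peeling argument that disposes of low-degree vertices, and a direct construction for the dense remainder.

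\emph{Reformulation.} The first observation is that a family $\mathcal{P}$ of paths is a strongly-separating path system of $G$ if and only if, for every edge $e$, the edge-sets of the paths of $\mathcal{P}$ containing $e$ intersect in exactly $\{e\}$. The hardest instances of this condition are pairs of edges sharing a vertex $v$: each path $P\in\mathcal{P}$ through $v$ either ends at $v$, using a single edge at $v$, or runs through $v$, using two edges $vu$ and $vu'$; recording the pair $uu'$ for the latter type yields an auxiliary multigraph $H_v$ on $N(v)$, and one checks that the edges at $v$ are pairwise separated exactly when every $u\in N(v)$ either has degree at least $2$ in $H_v$ or is such that $vu$ is the final edge of some path of $\mathcal{P}$ ending at $v$. (Pairs of edges with no common endpoint are easier.) So the goal is a linear-sized $\mathcal{P}$ realising a valid $H_v$ at every vertex, with the low-degree vertices handled separately.

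\emph{Reductions and peeling.} One may restrict to connected graphs, summing over the at most $n$ components, and then, within a connected graph, to its blocks: a bridge $e$ is separated from every other edge by the single-edge path $\{e\}$, there are at most $n-1$ bridges, and the non-trivial blocks are pairwise edge-disjoint with at most $2n$ vertices in total, so a linear bound per block yields a linear bound overall; hence we may assume $G$ is $2$-connected. Next comes peeling: if $G$ has minimum degree at most a fixed constant $d$, delete a vertex $v$ of minimum degree, obtain a separating path system of $G-v$ by induction on $n$, and re-add the $\deg(v)$ single-edge paths $\{vu\}$ with $u\in N(v)$. These single-edge paths separate the new edges from one another and from the old edges, while the old paths (all of which avoid $v$) separate everything else, so this is again a separating path system — at a cost of only $\deg(v)=O(1)$ further paths. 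Iterating these reductions leaves the core construction below to handle graphs that are $2$-connected with minimum degree greater than $d$.

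\emph{The core construction, and the main obstacle.} For such a graph $\mathcal{P}$ is built from scratch. One convenient way to organise the construction is to look for a bounded number $k$ of path decompositions $\mathcal{Q}_1,\dots,\mathcal{Q}_k$ of $G$, each of linear size, that are \emph{edge-transversal}: for every edge $e$, the $k$ paths containing $e$ (one from each $\mathcal{Q}_i$) have edge-sets meeting only in $\{e\}$. Then $\mathcal{Q}_1\cup\dots\cup\mathcal{Q}_k$ is strongly separating (immediate from the reformulation), and such families are easy to exhibit for the basic building blocks — a path, a star, a complete graph. The real difficulty — and the technical heart of the proof — is to achieve this \emph{simultaneously at every vertex} within a linear budget: the reformulation shows that at least $\deg(v)$ paths must pass through each vertex $v$, so the total number of vertex-passages is at least $\sum_v\deg(v)=2|E(G)|$, possibly of order $n^2$, yet only $O(n)$ paths are available; hence the paths must, on average, pass through order $|E(G)|/n$ vertices and behave usefully at almost all of them. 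This is precisely where $2$-connectivity and the minimum-degree hypothesis are spent: they let one route a linear family of long paths, together with suitable transversal companions, through the bulk of the graph in a sufficiently controlled way that the conflict graph $H_v$ at every vertex acquires minimum degree at least $2$ or the pendant edges it needs. Summing the constants contributed by the component and block reductions, the peeling, and this construction gives the bound $19n$.
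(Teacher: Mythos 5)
Your proposal has a genuine gap: everything before the ``core construction'' is routine bookkeeping (and essentially correct), but the core construction itself --- the actual content of the theorem --- is never given. After reducing to $2$-connected graphs of minimum degree greater than a constant $d$, you state that one should find a bounded number of ``edge-transversal'' path decompositions, acknowledge that the real difficulty is to realise the local condition at every vertex simultaneously with only $O(n)$ paths, and then assert that ``$2$-connectivity and the minimum-degree hypothesis are spent'' to route such a family --- with no argument, no lemma, and no accounting that could produce the constant $19$. Note also that the reduction itself buys very little: $2$-connectivity plus minimum degree $>d$ is an extremely weak hypothesis (the hard instances in the literature, e.g.\ dense or pseudorandom graphs, satisfy it trivially), so the class you reduce to is essentially the general problem. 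Your ``bounded number of mutually transversal path decompositions'' plan is, in spirit, the route the authors themselves flag as open (their rainbow-path question, Problem~\ref{prob:rainbow}): they can only carry it out for trails, not paths, so invoking it without proof assumes something at least as hard as the theorem.

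For contrast, the paper's proof needs no connectivity or degree assumptions at all. It takes a longest path $P$ ending at $v$, applies Pósa rotation--extension (Lemma~\ref{lemma:brandt}) to get a set $S$ of endvertices with $|N(S)|\le 2|S|$, lets $H$ be the subgraph of edges meeting $S$ (so $|V(H)|\le 3|S|$), and separates $H$ from all of $G$ using $O(|S|)$ paths: single-edge paths for the edges of $P$ meeting $S$, a Dean--Kouider decomposition of $H'=H\setminus P_S$, and --- the key trick --- the families $M_k=\{v_iv_j: i+j=k\}$ and $N_k=\{v_iv_j: i+2j=k\}$ of nested edges along $P$, each of which can be threaded into a single path using edges of $P$, so that $5|V(H)|$ paths separate $H'$ from itself. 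Induction on $G\setminus S$ then gives $19n$. Some form of this quantitative engine (or Letzter's expander machinery) is exactly what is missing from your write-up; minor issues (e.g.\ ``degree at least $2$ in $H_v$'' should be ``at least two distinct neighbours in $H_v$'', and a path through $v$ forces only $\deg(v)/2$ passages) are secondary by comparison.
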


This conjecture was formulated independently by~Falgas-Ravry, Kittipassorn, Kor\'andi, Letzter, and Narayanan~\cite{falgas2013separating} for weakly-separating path systems,
and later strengthened by~Balogh, Csaba, Martin, and Pluh\'ar~\cite{balogh2016path} to strongly-separating path systems.
Very recently, Letzter~\cite{letzter2022separating} made
major progress toward Conjecture~\ref{conj:lin-sep} by proving
that every graph on \(n\) vertices admits a strongly-separating
system with $\bigoh(n\cdot \log^\star n)$ paths. A key component
in~\cite{letzter2022separating} is the use of sublinear
expanders, motivated by the recent work of Buci\'c and
Montgomery~\cite{bucic2022towards}.
 In this note we obtain a linear bound, thus confirming Conjecture~\ref{conj:lin-sep}.

\begin{theorem}\label{thm:main}
  Every graph on~$n$ vertices	admits a strongly-separating path system of size~\(19n\).
\end{theorem}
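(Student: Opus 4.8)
The plan is to prove Theorem~\ref{thm:main} by induction on $n$, arranging that reinserting a single vertex costs at most $19$ new paths; then $f(n)\le f(n-1)+19$ yields $f(n)\le 19n$.

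Two preliminary reductions simplify the picture. Isolated vertices may be deleted. Moreover, since every path lies inside one connected component, two edges in different components are automatically weakly separated and, once we separate edges within each component, strongly separated; as $\sum_i 19 n_i \le 19 n$ for a partition of $V(G)$ into components of sizes $n_i$ (a single-edge component contributing its edge as a one-path system), we may assume $G$ is connected. The base case of the induction (a bounded number of vertices) is immediate, e.g.\ by taking a spanning path together with all single edges.

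For the inductive step, choose a vertex $v$, set $G'=G-v$, and let $\mathcal P'$ be a strongly-separating path system of $G'$ of size at most $19(n-1)$ obtained by induction. Build $\mathcal P$ for $G$ as follows. Start from $\mathcal P'$; we allow ourselves to \emph{lengthen} a path by appending, at one of its ends, an edge incident to $v$, or to \emph{splice} two paths of $\mathcal P'$ into one through $v$ (joining a path ending at a neighbour $w_i$ of $v$ to one ending at a neighbour $w_j$, via the subpath $w_i v w_j$). Finally we add at most $19$ further paths, each using only edges incident to $v$ (cherries $w_i v w_j$, and single edges $v w_i$). The point that keeps the ``old'' pairs under control is that lengthening and splicing only \emph{add} edges incident to $v$: for $e,f\in E(G')$ strongly separated in $\mathcal P'$, the witnessing paths still contain $e$ (resp.\ $f$) and still avoid $f$ (resp.\ $e$) in $\mathcal P$.

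It remains to separate every pair involving an edge at $v$: two edges $vw_i,vw_j$; an edge $vw_i$ from each edge of $G'$; and each edge of $G'$ from each $vw_i$. The first two are handled by assembling, from cherries at $v$ and lengthenings of paths of $\mathcal P'$, a strongly-separating family for the star at $v$ whose paths reach far enough into $G'$ to distinguish themselves. \textbf{Here lies the main obstacle.} A star with $d$ edges genuinely needs about $d/2$ paths through its centre, so when $d=\deg_G(v)$ is large we cannot pay for them with new paths; instead we must recycle the roughly $d$ path-ends of $\mathcal P'$ already sitting at $w_1,\dots,w_d$ — and simultaneously we must not, through lengthening or splicing, destroy the only path of $\mathcal P'$ through some $e\in E(G')$ that happened to avoid a prescribed $vw_i$. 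Reconciling these needs forces a strengthened induction hypothesis controlling the distribution of path-ends of $\mathcal P'$ (for instance: vertices of high degree are ends of many paths, and each edge lies on several paths behaving differently near its endpoints), strong enough to drive the reinsertion yet maintainable with only $19$ new paths per vertex; pinning down such a hypothesis and verifying the accounting is the technical heart of the argument, and the constant $19$ is its price. (That a linear bound is even plausible can be seen as follows: if $E(G)$ split into two path decompositions $\mathcal A,\mathcal B$ with $|E(A)\cap E(B)|\le 1$ for all $A\in\mathcal A$ and $B\in\mathcal B$, then for each edge $e$ the unique members of $\mathcal A$ and $\mathcal B$ through $e$ meet only in $e$, so one of them separates $e$ from any other edge in either direction; and such decompositions cannot have fewer than $\max\bigl(|E(G)|/(n-1),\lceil|E(G)|^{1/2}\rceil\bigr)=O(n)$ parts, which is consistent with — and suggestive of — a linear bound.)
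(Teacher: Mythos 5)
Your proposal correctly sets up the induction and honestly identifies where it breaks, but it does not close the gap: the entire difficulty of the theorem is concentrated in the step you flag as ``the main obstacle,'' and no argument is given there. Removing a single vertex $v$ of degree $d$ leaves you needing at least $\lceil d/2\rceil$ paths through $v$ just to cover (let alone strongly separate) the star at $v$; for large $d$ these must be recycled from $\mathcal P'$ by lengthening or splicing, but then each recycled path acquires new edges $vw_i$ that must themselves be separated from every edge of $G'$ and from each other, and the accounting for this is precisely what is missing. The ``strengthened induction hypothesis controlling the distribution of path-ends'' that you invoke is exactly the statement you would need to formulate and prove; you give no candidate, and it is not clear that any such hypothesis is simultaneously strong enough to drive the reinsertion and weak enough to be maintained. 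As written, this is a plan whose hardest step is deferred, not a proof.

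For comparison, the paper sidesteps the high-degree-vertex problem entirely by never removing a single vertex. It takes a longest path $P=u\cdots v$ and the P\'osa rotation set $S=S_v(P)$ of endpoints of derived paths; Lemma~\ref{lemma:brandt} gives $|N(S)|\le 2|S|$, so the subgraph $H$ of edges meeting $S$ has at most $3|S|$ vertices, \emph{all of which lie on the single path $P$}. Induction is applied to $G\setminus S$, freeing a budget of $19|S|$ paths, and the edges of $H$ are separated by $O(|S|)$ paths threaded along $P$: writing the vertices of $H$ as $v_1,\dots,v_{n'}$ in path order, the chords $v_iv_j$ are grouped by the values of $i+j$ and of $i+2j$; each group is a nested family traversable by one path using segments of $P$, and two chords agreeing on both $i+j$ and $i+2j$ coincide. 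The key structural fact --- that the expensive neighbourhood sits on one path, with size linearly bounded by $|S|$ --- is what makes the linear accounting work, and your single-vertex scheme has no analogue of it.
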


Our proof uses Pósa rotation--extension tools~\cite{posa1976hamiltonian}
as presented by Brandt, Broersma, Diestel, and Kriesell~\cite{brandt2006global},
and is inspired by  Lemma~2.2 in Conlon, Fox, and Sudakov~\cite{conlon2014cycle}.
In essence, we reduce the general problem to
\defi{traceable graphs}, i.e., graphs with a path that spans all vertices, known as a Hamiltonian path.

Before delving into technical arguments, we point out that the upper bound of $19n$ is likely far from optimal.
Balogh et al.~\cite[Theorem 10]{balogh2016path} showed that the complete bipartite graph \(K_{\varepsilon n,(1-\varepsilon)n}\)
cannot be strongly separated by fewer than \(2(1-2\varepsilon)n\)~paths.
However, we are not aware of any graph with a larger lower bound. Therefore, the following might be true.
\begin{problem}
  Does every graph on $n$ vertices admit a strongly-separating path system of size~$2n$\,?
\end{problem}
Determining the exact constant is challenging even for cliques~\cite{wickes}.
In our initial attempts to solve Conjecture~\ref{conj:lin-sep}, we stumbled upon this question:

\begin{problem}\label{prob:rainbow}
Does every properly edge-colored graph \(G\) admit
$\bigoh(|V(G)|)$ rainbow paths that cover~$E(G)$\,?
\end{problem}

In this context, a rainbow path is a path that does not contain two edges of the same color.
To the best of our knowledge, this question has not been stated anywhere else.
  We pose this question here, since we believe it to be natural and interesting in its own right.
Problem~\ref{prob:rainbow}, if answered affirmatively, would yield an alternative proof of Conjecture~\ref{conj:lin-sep}:
start with a linear path decomposition (e.g., with Theorem~\ref{thm:dean-kouider}),
decompose each path into two matchings, and define two graphs each containing precisely one of the matchings of each path.
The matchings form a proper edge coloring of each such graph,
and the two linear rainbow path decompositions of these graph together with the original path decomposition form a strongly separating path system.  We remark that we can verify a version of Problem~\ref{prob:rainbow}  where ``paths'' is replaced by ``trails''.

\subsection*{Proof of Theorem~\ref{thm:main}}

From now on, every separating path system is a strongly-separating path system.
We use the following standard notation.
Given a graph \(G\), and  a set \(S\subseteq V(G)\),
we denote by \(N_G(S)\) the set of vertices \emph{not} in $S$ adjacent in \(G\) to some vertex in \(S\).
We omit subscripts when clear from the context.

\medskip
\noindent\textbf{Pósa rotation-extension.}
Given a graph $G$ and vertices $u, v$ in \(G\), let \(P = u \cdots v\) be a path from $u$ to $v$.
If \(x\in V(P)\) is a neighbor of \(u\) in \(G\)
and \(x^-\) is the vertex preceding \(x\) in~\(P\),
then \(P' = P-xx^- + ux\) is a path in \(G\) for which \(V(P') = V(P)\).
We say that \(P'\) \defi{has been obtained from} \(P\) by an
\defi{elementary exchange} fixing~\(v\) (see Figure~\ref{fig:rotation}).
A path obtained from~\(P\) by a (possibly empty) sequence of elementary exchanges fixing~\(v\)
is said to be a path \defi{derived} from~\(P\).
The set of endvertices of paths derived from~\(P\) that are distinct from~\(v\) is denoted by~\(S_v(P)\).
Since all paths derived from~\(P\) have the same vertex set as~\(P\), we have~\(S_v(P) \subseteq V(P)\).
When \(P\) is a longest path ending at \(v\), we obtain the following.

\begin{figure}[t]
          \begin{center}
            \tikzset{highlight/.style={orange!75!white,line width=6pt,line cap=round}}
\tikzset{highlight2/.style={tngreen!70!yellow!60,line width=11pt,line cap=round}}
\tikzset{psline/.style={decorate,decoration={amplitude=0.3mm,segment length=2mm,post length=1.5mm,pre length=1.5mm,coil,aspect=0}}}
\tikzset{plainlin/.style={line width=1pt}}
\def\shortendistance{5pt}
\tikzset{plainlin shorten/.style={line width=1pt,shorten >=\shortendistance,shorten <=\shortendistance}}

\tikzset{lin shorten/.style={shorten >=\shortendistance,shorten <=\shortendistance,line width=1pt,decoration={markings, mark=at position 0.5*\pgfdecoratedpathlength+1.8pt with {\arrow{angle 90}}}, postaction={decorate}}}

\newcommand{\vertexat}[1]{\fill [very thick,draw=white,fill=black] #1 circle (3pt);}

\begin{tikzpicture}

    \coordinate  (u)   at  (0,  1.5);
    \coordinate  (x-)  at  (2,  1.5);
    \coordinate  (x)   at  (3,  1.5);
    \coordinate  (v)   at  (5,  1.5);

    \begin{scope}[opacity=.4]
      \draw [highlight,rounded corners] (x-) to (u) to [out= 45, in=135] (x)
      to  (v);
    \end{scope}

    \draw [thick] (u) to (x-) to (x) to (v);
    \draw [thick] (u) to [out=45,in=135] (x);

    \foreach \nn in {u,x-,x,v} {
      \vertexat{(\nn)};
    }

    \node [anchor=north] at (u) {$u\vphantom{x^-}$};
    \node [anchor=north] at (x-) {$x^-$};
    \node [anchor=north] at (x) {$x\vphantom{x^-}$};
    \node [anchor=north] at (v) {$v\vphantom{x^-}$};
\end{tikzpicture}
            \caption{a path (highlighted)
              obtained by an elementary exchange fixing~\(v\).
            }\label{fig:rotation}
          \end{center}
\end{figure}

\begin{lemma}[\cite{brandt2006global}]\label{lemma:brandt}
  Let  \(P = u \cdots v\) be a longest path of a graph $G$
  and let \(S = S_v(P)\). Then \(|N_G(S)| \leq 2|S|\).
\end{lemma}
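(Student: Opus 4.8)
\emph{Proof plan.}
Write $P = x_0x_1\cdots x_\ell$ with $x_\ell = v$ and set $S = S_v(P)$; we may assume $\ell \ge 1$, as otherwise $S = \emptyset$ and there is nothing to prove. The plan has three steps: (i) show $N_G(S) \subseteq V(P)$; (ii) show that every vertex of $N_G(S)$ is a neighbour of~$S$ \emph{along the path~$P$}; and (iii) deduce the bound by a counting argument.

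For step~(i): if some $s \in S$ had a neighbour $w \notin V(P)$, pick a path $Q = s\cdots v$ derived from~$P$ (one exists because $s \in S$); since $V(Q) = V(P)$, the path $Q + sw$ spans $V(P) \cup \{w\}$ and is longer than~$P$, a contradiction. Hence $N_G(S) \subseteq V(P)$, so every $w \in N_G(S)$ equals $x_m$ for some $m$; moreover $m \ge 1$, since $x_0 = u \in S$ (the path $P$ itself is derived from~$P$).

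Step~(ii) is the heart of the matter. I would prove: \emph{if $w = x_m \in N_G(S)$, then $x_{m-1} \in S$ or $x_{m+1} \in S$}, with the convention that $x_{-1}$ and $x_{\ell+1}$ are undefined and hence not in~$S$. Fix a neighbour $z \in S$ of~$w$, and let $r$ be the least number of elementary exchanges fixing~$v$ needed to turn~$P$ into a path ending at~$z$. The argument is by induction on~$r$, but stated for an arbitrary path~$P'$ derived from~$P$ in place of~$P$: this generality is forced upon us because a single rotation scrambles the order of~$P$, and one must keep track of which path the words ``predecessor'' and ``successor'' refer to. For the base case $r = 0$, the vertex~$z$ is the endpoint of~$P'$ other than~$v$; if~$w$ is the $P'$-successor of~$z$ we are done, and otherwise~$zw$ is not an edge of~$P'$, so the elementary exchange that deletes from~$P'$ the edge joining~$w$ to its predecessor and adds~$zw$ produces a path derived from~$P'$ (hence from~$P$) whose endpoint other than~$v$ is exactly that predecessor of~$w$, which therefore lies in~$S$. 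For $r \ge 1$, I would peel off the first of the~$r$ exchanges, $P' \to R_1$; here~$R_1$ is~$P'$ with a prefix reversed, and~$z$ is reachable from~$R_1$ in $r-1$ exchanges, so the induction hypothesis applied to~$R_1$ tells us that the $R_1$-predecessor or the $R_1$-successor of~$w$ lies in~$S$. Translating indices through the explicit description of~$R_1$ then yields the claim for~$P'$; the only case not covered by this translation is when~$w$ is the pivot vertex of that first exchange, but then~$w$ is a neighbour of the endpoint of~$P'$, so the base case applies. Taking $P' = P$ gives~(ii). I expect this inductive bookkeeping to be the main obstacle.

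Finally, step~(iii): define $\phi\colon N_G(S) \to S \times \{+,-\}$ by $\phi(x_m) = (x_{m-1},+)$ if $x_{m-1} \in S$, and $\phi(x_m) = (x_{m+1},-)$ otherwise (so that $x_{m+1} \in S$, by~(ii)). A vertex of~$P$ is determined by its index, so~$\phi$ is injective; hence $|N_G(S)| \le |S \times \{+,-\}| = 2|S|$, as required.
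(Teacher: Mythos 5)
Your proposal is correct and takes essentially the same route as the paper: both arguments reduce the lemma to the inclusion \(N_G(S)\subseteq N_P(S)\) (your step~(iii) injection is just the statement that \(P\) has maximum degree~\(2\)), and both prove that inclusion by tracking how elementary exchanges change the path-edges at \(w\), using the key fact that the edge deleted by an exchange has an endpoint that becomes a new endvertex and hence lies in \(S\). The paper organizes this bookkeeping by appending one extra exchange and examining the first deleted edge incident to the vertex in question along the derivation sequence, while you run an induction peeling off the first exchange; the two schemes are interchangeable and your case analysis (pivot vertex handled via the base case, the other affected vertices being in \(S\)) goes through.
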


\begin{proof}[{Proof rephrased from~{\cite[Lemma 2.6]{brandt2006global}}}]
  Since the maximum degree of \(P\)~is~\(2\), it suffices to show  that \(N_G(S) \subseteq N_P(S)\).
  Let \(x\in S\) and \(y\in N_G(x)\setminus S\) be given.
  As \(x\in S\) there is a longest path \(Q = x \cdots v\) derived from \(P\).
  Note that, by the maximality of~\(Q\), we have $y\in V(Q)$.
  Let~\(z\) denote the predecessor of \(y\) in~\(Q\),
  and note that \(Q + xy - yz\) is obtained from~\(Q\) by an elementary exchange.
  Now, consider the first edge \(e\) incident to~\(y\) that is removed by
  an elementary exchange in the ``derivation sequence'' from \(P\) to~\(Q + xy - yz\),
  and note that \(e\in E(P)\).
  Note that when an elementary exchange removes an edge~\(ab\) of a path derived from~\(P\),
  then \(a\)~or~\(b\) belongs to~\(S\).
  Therefore one of the vertices of~$e$ is in~$S$. Since $y \not\in S$ and~$e \in E(P)$,
  we obtain \(y\in N_P(S)\), as~desired.
\end{proof}

Our argument requires a lemma stating that every $n$-vertex graph admits an edge-cover
by $\bigoh(n)$~paths.
  While elementary arguments yield a cover by at most \(6n\)~paths
  (as we discuss at the end of the paper),
  we optimize the multiplicative constant in Theorem~\ref{thm:main}
  using a result of Dean and~Kouider~\cite{dean2000gallai}
  that strengthens a well-known result of Lovász~\cite{lovasz1968covering}.

\begin{theorem}[\cite{dean2000gallai}]\label{thm:dean-kouider}
  Every graph~$G$ contains at most \(2|V(G)|/3\) edge-disjoint~paths covering~$E(G)$.
\end{theorem}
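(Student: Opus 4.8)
The plan is to reduce to connected graphs and induct. Writing \(p(H)\) for the least number of edge-disjoint paths decomposing \(E(H)\), I first observe that if \(G\) has components \(G_1,\dots,G_c\) with \(n_i=|V(G_i)|\), then superadditivity of the floor, \(\lfloor x\rfloor+\lfloor y\rfloor\le\lfloor x+y\rfloor\), gives \(\sum_i\lfloor 2n_i/3\rfloor\le\lfloor 2n/3\rfloor\); so it suffices to prove \(p(G_i)\le\lfloor 2n_i/3\rfloor\) for each connected \(G_i\). The right bookkeeping is to carry, alongside a decomposition \(D\), the \emph{endpoint multiplicities} \(d_D(v)\), the number of paths of \(D\) having \(v\) as an end. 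Since every path has two ends, \(2\lvert D\rvert=\sum_v d_D(v)\) and \(d_D(v)\equiv\deg_G(v)\pmod 2\). Hence the goal \(\lvert D\rvert\le\lfloor 2n/3\rfloor\) is exactly the requirement \(\sum_v d_D(v)\le 2\lfloor 2n/3\rfloor\), i.e.\ an \emph{average endpoint multiplicity at most \(\frac43\)}. I would prove this by induction on \(\lvert E(G)\rvert\), strengthening the hypothesis so as to assert the existence of a decomposition with prescribed control on the \(d_D(v)\).

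The inexpensive vertices are those of odd degree, since each is forced to satisfy \(d_D(v)\ge 1\), and a decomposition attaining \(d_D(v)=1\) there costs exactly its own share. First I would strip trivial structure: isolated vertices cost nothing and only relax the floor, while a leaf \(v\) with neighbour \(u\) is removed, the induction applied to \(G-v\), and the edge \(uv\) appended to a path already ending at \(u\) --- arranging that such a path exists is precisely what the strengthened induction hypothesis buys. When \(G\) has two odd vertices \(u,v\), I would delete the edges of a path \(P\) joining \(u\) and \(v\) (which flips the parity of only \(u\) and \(v\), lowering the number of odd vertices by two), decompose \(G-E(P)\) inductively, and reinsert \(P\): its two fresh endpoints are paid for by the two odd vertices consumed, so the running average stays at \(1<\frac43\).

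The crux, and the origin of the constant \(\frac23\), is the \emph{even} (Eulerian) case, in which every vertex has even degree and no endpoint comes for free. Each nonempty even component must still spend at least two endpoint slots, and the extremal configuration is the triangle, which forces two path-ends onto three vertices (average exactly \(\frac43\)); a disjoint union of triangles shows \(\lfloor 2n/3\rfloor\) cannot be improved. Here I would take Lovász's classical decomposition into paths and cycles~\cite{lovasz1968covering} as a starting point and convert cycles into paths while creating as few endpoints as possible: a long cycle opens into one long path plus one short path, diluting its two endpoints over many vertices, whereas a short cycle must be charged against its own few vertices. The technical heart is a local, amortized analysis guaranteeing that the routing can always be chosen so that no three-vertex cluster absorbs more than two endpoints --- for instance by suppressing a degree-two vertex or peeling off a triangle to descend to a smaller even graph, or by merging a cycle into an incident path so that their ends coincide. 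I expect this even-graph analysis, and in particular the proof that triangles are the sole obstruction to beating \(\frac23\), to be the main difficulty; by contrast the leaf and odd-vertex reductions become routine once the induction hypothesis is phrased in terms of the endpoint multiplicities \(d_D(v)\).
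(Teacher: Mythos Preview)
The paper does not prove Theorem~\ref{thm:dean-kouider}; it is quoted from Dean and Kouider~\cite{dean2000gallai} and used as a black box in the proof of Theorem~\ref{thm:main}. There is therefore no proof in the paper to compare your proposal against. (Indeed, the authors remark at the end that one can bypass Theorem~\ref{thm:dean-kouider} entirely and still get a linear bound, at the cost of a worse constant.)

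As for the proposal itself, it is a plan rather than a proof, and you say as much: the Eulerian case is where the entire difficulty lies, and your treatment of it (``convert cycles into paths while creating as few endpoints as possible'', ``a local, amortized analysis guaranteeing that the routing can always be chosen so that no three-vertex cluster absorbs more than two endpoints'') is a wish list, not an argument. The reductions you describe for odd vertices and leaves are standard and do work, but they only serve to reduce to the even case, which you then leave open. Your endpoint-multiplicity bookkeeping \(2|D|=\sum_v d_D(v)\) is the right lens, and the observation that disjoint triangles force the constant \(2/3\) is correct; but nothing here explains \emph{why} one can always avoid spending more than four endpoints per three vertices in an arbitrary Eulerian graph. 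That is the theorem. If you want to supply a proof, you will need to actually carry out the even-graph analysis --- for that, consult Dean and Kouider's paper directly, as their argument is not reproduced here.
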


Let \(H\) and \(G\) be (not necessarily disjoint) graphs.
We say that a set of paths \(\mathcal{P}\) in \(H \cup G\) \defi{separates}~\(H\) from~\(G\)
if for each pair \((e,f) \in E(H) \times E(G)\) with \(e \ne f\)
there is a path in \(\mathcal{P}\) that contains \(e\) and does not contain \(f\).
Now, we are able to prove our result.

\begin{proof}[Proof of Theorem~\ref{thm:main}]
We proceed by induction on $n$.
Let \(G\) be a graph with \(n\) vertices.
If~\(G\)~is empty, the result trivially holds.
If not, we consider $P$ and $S$ as in Lemma~\ref{lemma:brandt}.
Let \(H\) be the subgraph of \(G\) induced by the edges with at least one vertex in~\(S\),
set \(n' = |V(H)|\),
and note that \(n' \leq 3|S|\).
Let \(G' = G \setminus S\) and note that \(G\) is the (edge) disjoint union of \(G'\) and \(H\).
Note that, since \(G\) is not empty, \(S\) is not empty.
By~the induction hypothesis, \(G'\) admits a separating path system \(\mathcal{Q}'\)
of size \(19(n-|S|)\).
Since \(\mathcal{Q}'\) covers~\(G'\), the family~\(\mathcal{Q}'\) separates  \(G'\) from \(H\).
In what follows, we construct a set of paths \(\mathcal{P}\) that separates
\(H\) from \(G\).
Moreover, we obtain such \(\mathcal{P}\) so that \(|\mathcal{P}| \leq 19|S|\),
and hence \(\mathcal{Q}' \cup \mathcal{P}\)
is a separating path system of \(G\)
with cardinality at most \(19(n-|S|) + 19 |S| = 19n\) as desired.

First, let \(P_S = E(P) \cap E(H)\) be the set of edges of \(P\) having at least one vertex in \(S\) (see Figure~\ref{fig:NS}), and let \(\mathcal{P}_S\) be the set of paths each
consisting of a single edge in \(P_S\), i.e., \(\mathcal{P}_S = \{(e,\{e\}) : e \in P_S\}\).
Now, let \(\mathcal{D}\) be the path decomposition of \(H' = H\setminus P_S\) given by Theorem~\ref{thm:dean-kouider}.
Note that \(\mathcal{P}_S \cup \mathcal{D}\)
has size at most \(2|S| + 2n'/3 \leq 4|S|\) and separates
(i)  \(H\) from \(G'\);
(ii)  \(H'\) from \(P\);
and (iii) \(P_S\) from \(G\).
It remains to create a set of at most \(5n' \leq 15|S|\) paths that separates \(H'\) from itself  (see Figure~\ref{fig:NS}).
\begin{figure}
  \vspace*{-1.5cm}
  \begin{center}
    \hfill
    \begin{minipage}{.48\textwidth}
      \tikzset{highlight/.style={orange!75!white,line width=6pt,line cap=round}}
\tikzset{highlight2/.style={tngreen!70!yellow!60,line width=11pt,line cap=round}}
\tikzset{psline/.style={decorate,decoration={amplitude=0.3mm,segment length=2mm,post length=1.5mm,pre length=1.5mm,coil,aspect=0}}}
\tikzset{plainlin/.style={line width=1pt}}
\def\shortendistance{5pt}
\tikzset{plainlin shorten/.style={line width=1pt,shorten >=\shortendistance,shorten <=\shortendistance}}

\tikzset{lin shorten/.style={shorten >=\shortendistance,shorten <=\shortendistance,line width=1pt,decoration={markings, mark=at position 0.5*\pgfdecoratedpathlength+1.8pt with {\arrow{angle 90}}}, postaction={decorate}}}

\usetikzlibrary{calc,decorations.markings,decorations.pathmorphing,arrows,external,decorations.pathreplacing,arrows.meta,calc,decorations.markings,decorations.pathmorphing,shapes}

\newcommand{\vertexat}[1]{\fill [very thick,draw=white,fill=black] #1 circle (3pt);}

\definecolor{tnred}{RGB}{     255, 63,   63}
\definecolor{tnorange}{RGB}{  220, 140,   6}
\definecolor{tnblue}{RGB}{     10, 154, 215}
\definecolor{tngreen}{RGB}{   114, 165,  10}
\definecolor{tnmagenta}{RGB}{ 215,  10, 154}
\definecolor{tnbg}{RGB}{252,252,252}
\definecolor{tnfg}{RGB}{5,8,12}

\begin{tikzpicture}

    \node at (1,4.3) {$S$};
    \node at (3,4.3) {$N(S)$};

    \draw [dotted,thick,gray!50] (2,-1.5) -- (2,5);
    \draw [dotted,thick,gray!50] (4,-1.5) -- (4,5);

\def\midx{3}
    \coordinate (a) at ($(5,-1)$);
    \coordinate (b) at ($(\midx,-1)$);
    \coordinate (c) at ($(1, -1)$);
    \coordinate (d) at ($(1, 0)$);
    \coordinate (e) at ($(\midx, 0)$);
    \coordinate (f) at ($(5, .5)$);
    \coordinate (g) at ($(\midx, 1)$);
    \coordinate (h) at ($(1, 1)$);
    \coordinate (i) at ($(0,1.5)$);
    \coordinate (j) at ($(1,2)$);
    \coordinate (k) at ($(\midx,2)$);
    \coordinate (l) at ($(5,2.5)$);
    \coordinate (m) at ($(\midx,3)$);
    \coordinate (n) at ($(5,3.5)$);

    \draw [plainlin] (a) -- (b);
    \draw [dashdotted, very thick,red] (b) -- (c);
    \draw [dashdotted, very thick,red] (c) -- (d);
    \draw [dashdotted, very thick,red] (d) -- (e);
    \draw [plainlin]
    (d) -- (h);
    \draw [plainlin]
    (d) -- (i);
    \draw [plainlin]
    (d) -- (g);
    \draw [plainlin] (e) -- (f);
    \draw [plainlin] (f) -- (g);
    \draw [dashdotted, very thick,red] (g) -- (h);
    \draw [dashdotted, very thick,red] (h) -- (i);
    \draw [plainlin]
    (h) -- (j);
    \draw [dashdotted, very thick,red] (i) -- (j);
    \draw [dashdotted, very thick,red] (j) -- (k);
    \draw [plainlin]
    (j) -- (m);
    \draw [plainlin] (k) -- (l);
    \draw [plainlin] (l) -- (m);
    \draw [plainlin] (m) -- (n);

    \foreach \nn in {a,b,c,d,e,f,g,h,i,j,k,l,m,n} {
      \vertexat{(\nn)}
    }

    \draw [highlight2,rounded corners,line cap=round,opacity=0.3] (a) to (b) to (c) to (d) to (e) to (f) to(g) to (h) to (i) to (j) to (k) to (l) to (m) to (n);;

\end{tikzpicture}
    \end{minipage}
    \hfill
    \begin{minipage}{.48\textwidth}
      \vspace*{1.5cm}
      \definecolor{tnred}{RGB}{     255, 63,   63}
\definecolor{tnorange}{RGB}{  220, 140,   6}
\definecolor{tnblue}{RGB}{     10, 154, 215}
\definecolor{tngreen}{RGB}{   114, 165,  10}
\definecolor{tnmagenta}{RGB}{ 215,  10, 154}
\definecolor{tnbg}{RGB}{252,252,252}
\definecolor{tnfg}{RGB}{5,8,12}

\tikzset{highlight/.style={orange!75!white,line width=6pt,line cap=round}}
\tikzset{highlight2/.style={tngreen!70!yellow!60,line width=11pt,line cap=round}}
\tikzset{psline/.style={decorate,decoration={amplitude=0.3mm,segment length=2mm,post length=1.5mm,pre length=1.5mm,coil,aspect=0}}}
\tikzset{plainlin/.style={line width=1pt}}
\def\shortendistance{5pt}
\tikzset{plainlin shorten/.style={line width=1pt,shorten >=\shortendistance,shorten <=\shortendistance}}

\tikzset{lin shorten/.style={shorten >=\shortendistance,shorten <=\shortendistance,line width=1pt,decoration={markings, mark=at position 0.5*\pgfdecoratedpathlength+1.8pt with {\arrow{angle 90}}}, postaction={decorate}}}
\tikzsetnextfilename{fig-separation}

\newcommand{\vertexat}[1]{\fill [very thick,draw=white,fill=black] #1 circle (3pt);}

\begin{tikzpicture}
  \node[shape=circle,fill=gray!40,minimum size=1.4cm] (H')  at (-2, 0)    {$H'$};
  \node[shape=circle,fill=gray!40,minimum size=1.4cm] (G')  at ( 2, 0)    {$G'$};
  \node[shape=circle,fill=gray!40,minimum size=1.4cm] (PS) at ( 0,-3) {$P_S$};

   \begin{scope}[->, shorten >=2pt,thick]
     \draw (H') .. controls +(180:1.5cm) and +(110:1.5cm) .. node [midway,xshift=.5cm,yshift=.7cm] {$P_k$'s and $Q_k$'s} (H');

    \draw (G') .. controls +(80:1.5cm) and +(0:1.5cm) .. node [midway,xshift=.25cm,yshift=.25cm] {$\mathcal{Q}'$} (G');

    \draw (PS) .. controls +(-115:1.5cm) and +(-75:1.5cm) .. node [midway,xshift=0.1cm,yshift=-.3cm] {$\mathcal{P}_S$} (PS);

    \draw (H') .. controls +(40:1.5cm) and +(150:1.5cm) ..  node [pos=.3,yshift=-.3cm] {$\mathcal{D}$}  (G');
    \draw (G') .. controls +(220:1.5cm) and +(-30:1.5cm) ..  node [pos=.3,yshift=.3cm] {$\mathcal{Q}'$}  (H');

    \draw (H') .. controls +(-110:2cm) and +(140:2cm) ..  node [pos=.45,yshift=.3cm,xshift=.2cm] {$\mathcal{D}$}  (PS);
    \draw (PS) .. controls +(170:2.2cm) and +(-130:3cm) ..  node [pos=.2,yshift=-.3cm] {$\mathcal{P}_S$}  (H');

     \draw (G') .. controls +(-70:2cm) and +(40:2cm) ..  node [pos=.45,yshift=.3cm,xshift=-.2cm] {$\mathcal{Q}'$}  (PS);
     \draw (PS) .. controls +(10:2.2cm) and +(-50:3cm) ..  node [pos=.2,yshift=-.3cm] {$\mathcal{P}_S$}  (G');


 \end{scope}

\end{tikzpicture}
    \end{minipage}
    \vspace*{-1.3\baselineskip}
    \caption{Left: a set $S$ in a traceable graph and its neighborhood~$N(S)$;
              a Hamiltonian path is highlighted, dashed red edges are the edges in~$P_S$. Right: paths that separate subgraphs of~$G$, where $A\stackrel{\alpha}{\to}B$ indicates that $\alpha$ separates $A$ from~$B$ (for instance, $\mathcal{Q'}$ separates $G'$ from $H'$).}\label{fig:NS}
  \end{center}
\end{figure}

Let us write \(P = u_1 \cdots u_m\),
and let \(v_1,\ldots,v_{n'}\) be the vertices of \(H\) in the order that they appear in \(P\),
i.e., so that if \(v_i = u_{s_i}\) and \(v_j = u_{s_j}\), then \(i < j\) if and only if \(s_i < s_j\).
In what follows, each edge \(v_iv_j\) is written so that \(i < j\).
We define $5n'$ sets of edges as follows:
	for \(k\in [2n']\) let \(M_k = \{v_iv_j \in E(H') : i+j = k\}\),
	and for \(k \in [3n']\) let \(N_k = \{v_iv_j \in E(H') : i+2j = k\}\).
        We shall build a family of~$5n'$ paths, covering each of the $M_k$ and~$N_k$.
        (Strictly speaking, we need fewer than $5n'$ paths, since some $M_k$ and $N_k$
        are empty.
        However, as far as we can tell, this only leads to marginal improvements.)
Note that \(\mathcal{M} = \{M_1,\ldots,M_{2n}\}\) (resp.~\(\mathcal{N} = \{N_1,\ldots,N_{3n}\}\))  partitions \(E(H')\).

        \begin{figure}
          \begin{center}
              \includegraphics{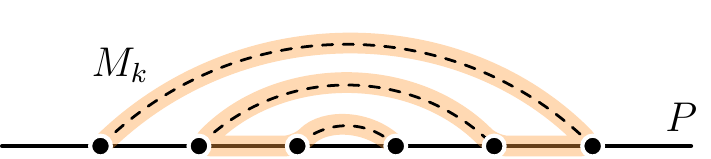}
            \caption{Set of comparable edges $M_k$ (dashed) and a highlighted  path $P_k$ joining them.}\label{fig:nested}
          \end{center}
          \end{figure}

We observe that for any $k$, the edges in $M_k$ are pairwise comparable (similarly for~$N_k$), in the sense that given $v_iv_j, v_{i'}v_{j'} \in M_k$ with $i>i'$ we have $j'>j$, and in fact $j' > j > i > i'$,
i.e.,
each \(M_k\) and \(N_k\) is a set of~``nested'' edges with respect to \(P\).
Moreover, the set $M_k$ is a subset of $\{v_1v_{k-1},\ldots,v_{\frac{k}2-1}v_{\frac{k}2+1}\}$, with the last indices tweaked to fit the parity of $k$. In the same manner $N_k\subseteq\{v_1v_{\frac{k-1}2},\ldots,v_{\frac{k}3-2}v_{\frac{k}3+1}\}$, again with tweaks depending on the value of $k \bmod 6$.
Now, for each \(k \in [2 n']\), we find a path \(P_k\)
using all edges of $M_k$ and (possibly) some edges in $P$ (see Figure~\ref{fig:nested}).
Let \(M_k = \{x_1y_1,\ldots,x_sy_s\}\) where the \(x_i\)'s are chosen in increasing order on \(P\) (hence the $y_i$'s in decreasing order).
By induction on \(i\) from \(s\) down to \(1\), there is a path \(R_i\) in \(P_i \cup \{x_iy_i,\ldots,x_sy_s\}\) that contains \(x_iy_i,\ldots,x_sy_s\) and ends in either \(x_i\) or \(y_i\),
where \(P_i\) denotes the subpath of \(P\) between \(x_i\) and \(y_i\).
Therefore, \(P_k = R_1\) is the desired path.
Analogously, for~\(k \in [3 n']\), we obtain a path \(Q_k\) that contains \(N_k\) and some edges of \(P\).

We claim that the collection of $P_k$'s and $Q_k$'s
together separates each pair of edges in~\(H'\).
Indeed, let $v_iv_j$ and $v_{i'}v_{j'}$ be two edges of \(H'\).
Each such edge belongs to exactly one $M_k$ and exactly one $N_k$.
If~they belong to different \(M_k\)'s, they belong to different $P_k$'s and are separated.
Similarly, if~they belong to different $N_k$'s. Therefore, we may assume that $i+j=i'+j'$ and $i+2 j=i'+2 j'$. This immediately yields $j=j'$ and $i=i'$,
as desired. Therefore, any two distinct edges in $H'$ are separated by this collection, which involves $5n'$ paths.
This concludes the proof.
\end{proof}

\subsection*{Discussion and further work}
As mentioned in the introduction, the best possible constant factor in Theorem~\ref{thm:main} might well be $2$. Here we obtain $19$, which can still be reduced a little with some care\footnote{For example, only applying Theorem~\ref{thm:dean-kouider} once for the whole graph, for all edges not in any $P$, would give an improvement of~$1.3333$. We decided against implementing this and other small improvements, as they do not seem worth the loss of clarity and simplicity.
}. While partial progress would only be of limited interest, we highlight here straightforward candidates for improvement. First, any improvement of the ratio $\frac{n'}{|S|}$ (currently $3$) would immediately significantly improve the factor. Second, with the arguments in this paper we can prove that the edges of any traceable graph with \(n\) vertices can be separated by \((6+2/3)n\) paths. In fact, any improvement of that could be plugged in the proof and used to improve the bound.
In contrast, we observe that Theorem~\ref{thm:dean-kouider} is tight for disjoint triangles.

We also note that by using the \(M_k\)'s together with Lemma~\ref{lemma:brandt}, we can prove that the edges of any graph can be covered by \(6n\) paths without using Theorem~\ref{thm:dean-kouider}. This allows us to verify Conjecture~\ref{conj:lin-sep}
in a self-contained manner.

  As a final remark, we observe that robust sublinear expanders,
which play a key role in~\cite{letzter2022separating}, have also
been applied in~\cite{bucic2022towards} to investigate the well-known Erd\H os--Gallai
conjecture, that says that the edges of any $n$-vertex graph can
be partitioned into $\bigoh(n) $ edges and
cycles. It is natural to wonder if our
approach might be useful in tackling this conjecture as well.

\section*{Acknowledgments}

The first author expresses gratitude to Bhargav Narayanan for sharing the question with her at the Oberwolfach Workshop id:2201, and to the organizers and participants of that workshop for creating a stimulating environment.
We also thank Matija Buci\'c, Victor Falgas-Ravry and Lyuben Lichev for insightful comments which enriched this manuscript.

\bibliographystyle{amsplain}


\begin{aicauthors}
\begin{authorinfo}[bonamy]
  Marthe Bonamy\\
  Univ. Bordeaux, CNRS, Bordeaux INP, LaBRI, UMR 5800, F-33400\\
  Talence, France\\
  marthe.bonamy\imageat{}u-bordeaux\imagedot{}fr \\
  \url{https://www.labri.fr/perso/mbonamy/}
\end{authorinfo}
\begin{authorinfo}[botler]
  F\'abio Botler\\
  Professor\\
  Programa de Engenharia de Sistemas e Computa\c c\~ao \\
	Instituto Alberto Luiz Coimbra de P\'os-Gradua\c c\~ao e Pesquisa em Engenharia \\
	Universidade Federal do Rio de Janeiro \\
	Rio de Janeiro, Brasil\\
  fbotler\imageat{}cos\imagedot{}ufrj\imagedot{}br \\
  \url{http://www.cos.ufrj.br/~fbotler}
\end{authorinfo}
\begin{authorinfo}[dross]
  François Dross\\
  Associate Professor \\
  Univ. Bordeaux, CNRS, Bordeaux INP, LaBRI, UMR 5800, F-33400\\
  Talence, France\\
  francois.dross\imageat{}u-bordeaux\imagedot{}fr \\
  \url{https://sites.google.com/view/francoisdross/}
\end{authorinfo}
\begin{authorinfo}[naia]
  T\'assio Naia\\
  María de Maetzu fellow \\
  Centre de Reserca Matemática \\
  tnaia\imageat{}member\imagedot{}fsf\imagedot{}org \\
  \url{https://tassio.gitlab.io/site/}
\end{authorinfo}
\begin{authorinfo}[skokan]
  Jozef Skokan\\
  Professor\\
  Department of Mathematics\\
  London School of Economics and Political Science (LSE) \\
  Houghton Street, London, WC2A 2AE, United Kingdom\\
  j.skokan\imageat{}lse\imagedot{}ac\imagedot{}uk \\
  \url{http://www.maths.lse.ac.uk/personal/jozef/}
\end{authorinfo}
\end{aicauthors}

\end{document}